\documentclass[12pt]{article}

\usepackage{amsmath,amsthm,amsfonts,amssymb,graphicx,color,verbatim}

\textwidth 16.5cm \topmargin -1cm \textheight 23cm \oddsidemargin
-0.1cm

\theoremstyle{plain}
\newtheorem{theorem}{Theorem}[section]

\newtheorem{corollary}[theorem]{Corollary}

\newtheorem{remark}[theorem]{Remark}
\newtheorem{lemma}[theorem]{Lemma}
\newtheorem{definition}[theorem]{Definition}
\DeclareMathOperator*{\argmin}{argmin}
\newcommand{\R}{\mathbb{R}}

\newcommand{\N}{\mathbb{N}}

\newcommand{\abs}[1]{\left\vert#1\right\vert} 
\begin{document}

\title{Quantitative asymptotic regularity results for \\the composition of two mappings}
\author{ U. Kohlenbach$^{a}$, G. L\'opez-Acedo$^{b}$, A. Nicolae$^{c}$
}
\date{}
\maketitle

\begin{center}
{\scriptsize
$^{a}$Department of Mathematics, Technische Universit\" at Darmstadt, Schlossgartenstra\ss{}e 7, 64289 Darmstadt, Germany
\ \\
$^{b}$Department of Mathematical Analysis, University of Seville, Apdo. 1160, 41080-Seville, Spain
\ \\
$^{c}$Department of Mathematics, Babe\c s-Bolyai University, Kog\u alniceanu 1, 400084 Cluj-Napoca, Romania \\
\ \\
E-mail addresses: kohlenbach@mathematik.tu-darmstadt.de (U. Kohlenbach), glopez@us.es (G. L\'{o}pez-Acedo), anicolae@math.ubbcluj.ro (A. Nicolae)
}
\end{center}

\noindent\rule[8 true pt]{\linewidth}{.5 true pt}
\noindent{\footnotesize {\em Abstract:} In this paper, we use techniques which originate from proof mining  to give rates of asymptotic regularity and metastability for a sequence associated to the composition of two firmly nonexpansive mappings.\\

\noindent{\footnotesize {\em Keywords:} Firmly nonexpansive mapping. Asymptotic regularity. Convex optimization. CAT$(0)$ space.} \\

\noindent{\footnotesize {\em Mathematics Subject Classification (2000):}} 41A52; 41A65; 53C23; 03F10.\\ 

\noindent\rule[8 true pt]{\linewidth}{.5 true pt}\\

%

\section{Introduction} 
This paper continues the work initiated in \cite{AriLopNic15} where the asymptotic behavior of compositions of finitely many firmly nonexpansive mappings was studied with the main focus on asymptotic regularity and convergence results. Since the subdifferential of a proper, convex and lower semi-continuous function is a maximal monotone operator and the resolvent of a monotone operator is firmly nonexpansive, certain splitting methods applied to convex minimization problems are a very relevant instance where compositions of firmly nonexpansive mappings appear. In this line, Bauschke, Combettes and Reich \cite{BauComRei05} proved that if $f$ and $g$ are proper, convex and lower semicontinuous functions defined on a Hilbert space $H$, composing alternatively the resolvents of $f$ and $g$, one obtains weak convergence to a solution (if it exists) of the following minimization problem associated to $f$ and $g$:
\begin{equation}\label{def-fct-min}
\argmin_{(x,y)\in H \times H}  \left(f(x) + g(y) + \frac{1}{2\lambda}\|x-y\|^2\right),
\end{equation}
where $\lambda > 0$.  This problem  covers,  among others,  the convex feasibility problem  for two sets both in the consistent and inconsistent case.
 
 More recently, see \cite{Ban14},   problem (\ref{def-fct-min}) was considered  in the setting of CAT$(0)$ spaces  where it can be approached as in Hilbert spaces by applying alternatively the two resolvents of $f$ and $g$,  $J_\lambda^f$ and $J_\lambda^g$, respectively (which are well-defined in this context). When evaluating the values of the resolvents, errors may also be taken into account. Thus, given a starting point $x_0$, one can construct the sequences $(x_n)$ and $(y_n)$ defined by 
\begin{equation}
d\left(y_n,J_\lambda^g x_n\right) \le \varepsilon_n \quad \text{and} \quad d\left(x_{n+1},J_\lambda^f y_n\right) \le \delta_n,\quad\text{for each }n\in \N,
\end{equation}
where $\displaystyle \sum_{n = 0}^\infty\varepsilon_n < \infty$ and $\displaystyle\sum_{n =0}^\infty\delta_n < \infty$. Note that, if $(x^*,y^*)$ is a solution of \eqref{def-fct-min}, then $y^* = J_\lambda^g(x^*)$ and $x^* = J_\lambda^f(y^*)$, so $\text{Fix}(J_\lambda^f \circ J_\lambda^g) \ne \emptyset$. At the same time, if $x^* \in \text{Fix}(J_\lambda^f \circ J_\lambda^g)$, then $(x^*, J_\lambda^g(x^*))$ is a solution of \eqref{def-fct-min}. Since $J_\lambda^f$ and $J_\lambda^g$ are firmly nonexpansive, by \cite[Theorem 3.3]{AriLopNic15}, it follows that the sequences $(x_n)$ and $(y_n)$ are asymptotically regular (that is, $\displaystyle \lim_{n \to \infty}d(x_n,x_{n+1}) = 0$ and $\displaystyle\lim_{n \to \infty}d(y_n,y_{n+1}) = 0$), provided problem \eqref{def-fct-min} has a solution. If the range of one of the resolvents is boundedly compact, by \cite[Theorem 4.2]{AriLopNic15}, there exists  $u \in {\rm Fix}(J_\lambda^f \circ J_\lambda^g)$ such that $(x_n)$ and $(y_n)$ converge to $u$ and $J_\lambda^g u$, respectively.

In this paper, we use techniques which originate from proof mining (see 
section \ref{proof-mining} below and \cite{Koh08} for more details) to give explicit quantitative forms of these results. Section 3 contains our main result that provides a rate of asymptotic regularity for the sequences $(x_n)$ and $(y_n)$ obtained by composing alternatively two general firmly nonexpansive mappings (with or without errors) in CAT$(0)$ spaces. Section 4 focuses on rates of metastability for these two sequences: based on 
general facts from computability theory one can rule out the existence of 
computable rates of convergence for $(x_n)$ (or for $(y_n)$). However, metastability 
\[ \forall k\in \N\,\forall g:\N\to\N \,\exists n\in\N\, \forall i,j\in 
[n,n+g(n)] \ (d(x_i,x_j)<\frac{1}{k+1}), \] 
though noneffectively equivalent to the full Cauchy property of $(x_n),$ 
does admit (on general logical grounds) effective bounds $\Phi(k,g)$ on 
`$\exists n\in\N$'. We call such a bound $\Phi$ a rate of metastability. 
This concept has been known in logic as the Kreisel `no-counterexample 
interpretation' of which it is a special instance, and for the 
case at hand also coincides with the G\"odel functional interpretation (see 
\cite{Koh08}).
In 2007, the concept was rediscovered by T. Tao (\cite{Tao07}) who introduced the name 
`metastability' for it. Disregarding error terms for the moment, 
in our situation a rate of metastability might be seen as a far reaching 
generalization of a rate of asymptotic regularity as the latter results 
as the special case of the former where $g\equiv 1:$  \\ From 
\[ \forall k\in\N \,\exists n\le \Phi(k,1) \ (d(x_n,x_{n+1})<\frac{1}{k+1}), \]
the fact that $d(x_n,x_{n+1})$ is nonincreasing immediately gives
\[ \forall k\in \N \,\forall n\ge \Phi(k,1) \ (d(x_n,x_{n+1})<\frac{1}{k+1}). \]

\section{Preliminaries}
\subsection{CAT(0) spaces}
Let $(X,d)$ be a metric space. A {\it geodesic path} that joins two points $x,y \in X$ is a mapping $\gamma:[0,l]\subseteq\R\to X$ such that $\gamma (0)=x$, $\gamma(l)=y$ and $d(\gamma(t),\gamma(t'))=\abs{t-t'}$ for all $t,t'\in[0,l]$. The image of $\gamma$ is called a {\it geodesic segment} from $x$ to $y$. A point $z \in X$ belongs to such a geodesic segment if there exists $t\in [0,1]$ such that $d(x,z)=td(x,y)$ and $d(y,z)=(1-t)d(x,y)$ and we write $z=(1-t)x+ty$. $(X,d)$ is a {\it (uniquely) geodesic space} if every two points in $X$ are joined by a (unique) geodesic path. A subset $C$ of $X$ is {\it convex} if it contains all geodesic segments that join any two points in $C$. For more details on geodesic metric spaces, see \cite{BriHae99}.

There are several equivalent conditions for a geodesic metric space $(X, d)$ to be CAT(0), one of them being the following inequality (see, for example, \cite[Theorem 1.3.3]{Bac14}) which is to be satisfied for any four points $x,y,u,v \in X$
\begin{equation}\label{def-CAT0}
d(x,y)^2 + d(u,v)^2 \le d(x,v)^2 + d(y,u)^2 + 2d(x,u)d(y,v).
\end{equation}
CAT$(0)$ spaces include Hilbert spaces, $\mathbb{R}$-trees, Euclidean buildings, complete simply connected Riemannian manifolds of nonpositive sectional curvature, and many other important spaces.

\subsection{Firmly nonexpansive mappings}
Firmly nonexpansive mappings were introduced in Banach spaces by Bruck \cite{Bru73} (in the context of Hilbert spaces, these  mappings are precisely the firmly contractive ones considered earlier by Browder~\cite{Bro67}). Recently, Bruck's definition was extended to a nonlinear setting in~\cite{AriLeuLop14} (see also \cite{Nic13}; in the case of the Hilbert ball this is already due to 
\cite{GoebelReich}). 

\begin{definition}
Let $C$ be a nonempty subset of a CAT$(0)$ space $(X,d)$. A mapping $T:C\to X$ is {\it firmly nonexpansive} if
\[d(Tx,Ty)\leq d((1-\lambda)x + \lambda Tx,(1-\lambda)y + \lambda Ty),\]
for all $x,y\in C$ and $\lambda\in [0,1]$.
\end{definition}

Let $X$ be a complete CAT$(0)$ space. The metric projection onto closed and convex subsets of $X$ is firmly nonexpansive. Another important example of a firmly nonexpansive mapping is the {\it resolvent} of a convex, lower semi-continuous and proper function $f : X \to (-\infty,+\infty]$,
\[J_\lambda^f(x) := \argmin_{z \in X}\left(f(z) + \frac{1}{2\lambda}d(x,z)^2\right),\]
where $\lambda > 0$. 

In CAT$(0)$ spaces, every firmly nonexpansive mapping satisfies the condition below which was called property ($P_2$) in \cite{AriLopNic15}. Moreover, in this setting, every mapping with property ($P_2$) is nonexpansive. Note also that in Hilbert spaces, this notion coincides with firm nonexpansivity.

\begin{definition}
Let $C$ be a nonempty subset of a metric space $(X,d)$. A mapping $T:C\to X$ satisfies {\it property $(P_2)$} if
\begin{equation*}
2d(Tx,Ty)^2\leq d(x,Ty)^2+d(y,Tx)^2-d(x,Tx)^2-d(y,Ty)^2,
\end{equation*}
for all $x,y\in C$.
\end{definition}

\subsection{Proof mining}\label{proof-mining}

During the last two decades a systematic program of `proof mining' has emerged 
as a new applied form of proof theory and has successfully been applied to a number of areas of core mathematics (see \cite{Koh08} for a comprehensive 
treatment up to 2008). 
This logic-based program has its roots in Georg Kreisel's pioneering ideas of 
`unwinding of proofs' going back to the 1950's and is concerned with the 
extraction of explicit effective bounds from prima facie noneffective proofs. 
General logical metatheorems guarantee such extractions for large classes of 
proofs and provide algorithms (based on so-called proof interpretations) 
for the actual extraction from a given proof. 
This approach has been applied with particular success in the context of 
nonlinear analysis 
including fixed point theory, ergodic theory, topological dynamics, 
continuous optimization 
and abstract Cauchy problems. 
\\[1mm] 
One condition that 
guarantees such results is that the statement proven has (if written 
in the appropriate formal framework) the form 
\[ \forall \underline{x}\in \underline{X}^{(\underline{X})}\, \exists n\in\N \,
A_{\exists}(\underline{x},n),\] 
where $\underline{x}$ is a tuple of parameters ranging over various 
metric, hyperbolic or normed spaces $\underline{X}$ or suitable classes 
of mappings between such spaces and $A_{\exists}$ 
is purely existential. This is not the case for the usual formulation 
of the Cauchy property which is of the form $\forall\exists\forall$ but 
is satisfied for the (equivalent) metastable formulation since the bounded 
quantifier $\forall i,j\in [n,n+g(n)]$ can be disregarded. \\ However, for 
asymptotic regularity results $d(x_n,x_{n+1})\to 0$ 
(rather than the convergence of $(x_n)$  
itself) one usually can obtain full rates of convergence. One 
reason for 
this is that the sequence $(d(x_n,x_{n+1}))_{n\in\N}$ often is nondecreasing 
(as is the case for the sequences $(x_n),(y_n)$ defined by (\ref{it:one}) 
below). Then $d(x_n,x_{n+1})\to 0$ is equivalent to 
\[ \forall k\in\N\,\exists n\in\N \, (d(x_n,x_{n+1})<\frac{1}{k+1}), \] 
which has the right logical form and any bound $\Phi(k)$ on $n$ is a 
rate of asymptotic regularity.
\\[1mm] In the next section we will present rates of asymptotic regularity 
that have been extracted using this methodology from a noneffective 
asymptotic regularity proof given in \cite{AriLopNic15}. The noneffectivity 
of that proofs comes from the (repeated) use of the convergence of bounded 
monotone sequences which is known to fail in a computable reading. The 
analysis of the proof was obtained by first replacing the use of the limits 
of such sequences by sufficiently good Cauchy-points instead and then applying 
a well-known and effective rate of metastability for the Cauchy 
property of bounded monotone sequences to suitably chosen parameters 
$g$ and $\varepsilon$ 
(see the end of the proof of Theorem \ref{thm-as-reg-P2} below). \\[1mm] 
In the final 
section we will apply an effective proof mining result from \cite{KohLeuNic15} to convert 
the rates of asymptotic regularity into rates of metastability provided 
that the underlying space is compact. 
\\[1mm] As usual in applications of the proof mining methodology, the final 
bounds 
and the proofs of their correctness 
can be stated in ordinary mathematical terms without 
any reference to tools or concepts from logic.
\section{Rate of asymptotic regularity}

Let $X$ be a metric space, $T_1,T_2:X\to X$ and $(x_n)$ and $(y_n)$ be defined by
\begin{equation}\label{it:one}
y_n := T_1 x_n \quad \text{and} \quad x_{n+1} := T_2 y_n,\quad\text{for each }n\in \mathbb{N}.
\end{equation}

\begin{theorem} \label{thm-as-reg-P2}
Let $(X,d)$ be a CAT$(0)$ space and let $T_1,T_2:X\to X$ satisfy property $(P_2)$. Denote $S :=T_2 \circ T_1$ and suppose that $\emph{Fix}(S)\neq\emptyset$. Let $x_0 \in X$ and $b > 0$ such that there exists $u \in \emph{Fix}(S)$ with $2d(x_0,u) \le b$. Define the sequences $(x_n)$ and $(y_n)$ by~\eqref{it:one}. Then 
\[\forall \varepsilon > 0 \, \forall n \ge \Phi(\varepsilon,b)\, (d(y_n,y_{n+1}) \le d(x_n,x_{n+1}) \le \varepsilon),\]
where
\[
\Phi(\varepsilon,b) := k\left\lceil\frac{2b(1+2^k)}{\varepsilon}\right\rceil +1, \quad \text{with } k := \left\lceil\frac{2b}{\varepsilon}\right\rceil.
\] 
\end{theorem}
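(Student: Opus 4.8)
The plan is to follow the structure of the noneffective asymptotic regularity argument of \cite[Theorem~3.3]{AriLopNic15} and to make each appeal to ``a bounded monotone sequence converges'' quantitative by inserting the standard explicit metastable form of that principle.

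\emph{Step 1 (monotonicities and a reduction).} Since in a CAT$(0)$ space property $(P_2)$ implies nonexpansivity, $T_1$, $T_2$ and hence $S=T_2\circ T_1$ are nonexpansive. Put $v:=T_1u$, so that $u=T_2v$. Nonexpansivity of $T_2$ and $T_1$ gives $d(x_{n+1},u)\le d(y_n,v)\le d(x_n,u)$ for every $n$, so $(d(x_n,u))_n$ is nonincreasing and bounded above by $d(x_0,u)\le b/2$. In the same way $d(x_{n+1},x_{n+2})=d(T_2y_n,T_2y_{n+1})\le d(y_n,y_{n+1})=d(T_1x_n,T_1x_{n+1})\le d(x_n,x_{n+1})$, so $(d(x_n,x_{n+1}))_n$ is nonincreasing and $d(y_n,y_{n+1})\le d(x_n,x_{n+1})$ for all $n$. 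Hence it suffices to produce a single index $n_0\le\Phi(\varepsilon,b)$ with $d(x_{n_0},x_{n_0+1})\le\varepsilon$: monotonicity then yields $d(y_n,y_{n+1})\le d(x_n,x_{n+1})\le\varepsilon$ for all $n\ge n_0$, hence for all $n\ge\Phi(\varepsilon,b)$.

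\emph{Step 2 (the $(P_2)$ displacement estimate).} This is where the quantitative content of $(P_2)$ is extracted. Writing out $(P_2)$ for $T_1$ at the pairs $(x_n,u)$ and $(x_n,x_{n+1})$, for $T_2$ at the pairs $(y_n,v)$ and $(y_{n-1},y_n)$, and combining these with the CAT$(0)$ inequality~\eqref{def-CAT0}, one derives a recursive inequality that controls the local displacements $d(x_n,y_n)$ and $d(y_n,x_{n+1})$, and hence $d(x_n,x_{n+1})$, by the amount by which the monotone sequence $(d(x_n,u))_n$ of Step~1 decreases over a block of indices. The precise target is: for a suitable block length $L$ of the order of the quantity $\bigl\lceil 2b(1+2^{k})/\varepsilon\bigr\rceil$ appearing in the statement (with $k=\lceil 2b/\varepsilon\rceil$), whenever $d(x_N,u)-d(x_{N+L},u)\le\varepsilon/4$ one has $d(x_n,x_{n+1})\le\varepsilon$ for some $n\in[N,N+L]$. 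In a Hilbert space this step is cheap, since there $(P_2)$ reads $d(Tx,Tu)^2\le\langle x-u,\,Tx-Tu\rangle$, which telescopes into a summable series $\sum_n d(x_n,x_{n+1})^2<\infty$ and makes $L$ polynomial in $b/\varepsilon$. In a genuine CAT$(0)$ space the bilinearity underlying that telescoping is unavailable, so one must iterate the basic estimate along the block and the error terms accumulate multiplicatively; this is exactly what forces $L$ to grow exponentially in $k$, and hence in $1/\varepsilon$.

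\emph{Step 3 (quantifying the monotone convergence) and the main obstacle.} Finally, apply the well-known explicit rate of metastability for nonincreasing sequences in $[0,b/2]$ to $(d(x_n,u))_n$ with the constant counterfunction $g\equiv L$ and precision $\varepsilon/4$: such a sequence cannot have decrease exceeding $\varepsilon/4$ on each of $k=\lceil 2b/\varepsilon\rceil$ pairwise disjoint blocks (that would force a total decrease $>k\varepsilon/4\ge b/2$), so among the blocks $[0,L],[L,2L],\dots,[(k-1)L,kL]$ at least one, say $[jL,(j+1)L]$ with $(j+1)L\le kL$, satisfies $d(x_{jL},u)-d(x_{(j+1)L},u)\le\varepsilon/4$. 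Feeding this block into Step~2 produces an index $n_0\le kL$ with $d(x_{n_0},x_{n_0+1})\le\varepsilon$, and by Step~1 then $d(y_n,y_{n+1})\le d(x_n,x_{n+1})\le\varepsilon$ for all $n\ge kL+1$. Taking $L:=\bigl\lceil 2b(1+2^{k})/\varepsilon\bigr\rceil$ gives precisely $kL+1=\Phi(\varepsilon,b)$. The delicate point is Step~2: distilling from property $(P_2)$, in the absence of an inner product, a usable quantitative bound on the displacements, and then tracking the error accumulation along a block carefully enough that the required block length comes out as the stated $\bigl\lceil 2b(1+2^{k})/\varepsilon\bigr\rceil$. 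Steps~1 and~3 are routine bookkeeping — including the choice of the parameters $g$ and $\varepsilon$ fed to the metastability of bounded monotone sequences — once that estimate is in hand.
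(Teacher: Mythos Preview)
Your Step~2 is not a proof but a wish list: you state a target implication (small drop of $d(x_n,u)$ over a block of length $L$ forces $d(x_n,x_{n+1})\le\varepsilon$ somewhere in the block) without deriving any recursive inequality, and you yourself flag this as ``the delicate point''. Since Step~2 is the entire analytic content of the theorem, the proposal as it stands has a genuine gap.

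Moreover, the paper does \emph{not} work with the Fej\'er sequence $(d(x_n,u))_n$ at all in this argument. Instead it applies $(P_2)$ to $T_1$ at $(x_{n+k+1},x_{n+1})$ and to $T_2$ at $(y_{n+k},y_n)$, combines with the CAT$(0)$ four-point inequality, and after an AM--GM step obtains the purely internal recursion
\[
2r_{n+1,k}-(k-1)r_{n,1}\le r_{n,k+1},\qquad r_{n,k}:=d(y_n,y_{n+k}).
\]
An induction on $k$ then gives $r_{n,k}\ge k\,r_{n+k,1}-k2^k(r_{n,1}-r_{n+k,1})$, hence
\[
r_{n,1}-(1+2^k)(r_{n,1}-r_{n+k,1})\le \frac{r_{n,k}}{k}\le \frac{b}{k}.
\]
This is where the factor $1+2^k$ actually comes from, and the monotone sequence fed into the metastability principle is $(r_{n,1})=(d(y_n,y_{n+1}))_n$, not $(d(x_n,u))_n$. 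Correspondingly, in the paper the \emph{block length} is $k=\lceil 2b/\varepsilon\rceil$ and the \emph{number of blocks} is $\lceil 2b(1+2^k)/\varepsilon\rceil$ --- exactly the reverse of your assignment --- with the required smallness of the drop being $\varepsilon/(2(1+2^k))$, not $\varepsilon/4$. Your Steps~1 and~3 are fine as scaffolding, but to complete the proof you need to replace the placeholder in Step~2 by the $r_{n,k}$ recursion and its inductive unwinding, and adjust Step~3 accordingly.
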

\begin{proof}
Let $n \in \mathbb{N}$ and $k \in \mathbb{N}^*$. Since $T_1$ and $T_2$ satisfy property $(P_2)$ we have that

\begin{align*} 2d(y_{n+k+1},y_{n+1})^2 & \le  d(x_{n+k+1},y_{n+1})^2 + d(x_{n+1},y_{n+k+1})^2 \\
& \phantom{a\,}- d(x_{n+k+1},y_{n+k+1})^2 - d(x_{n+1},y_{n+1})^2
\end{align*}
and 
\begin{align*} 2d(x_{n+k+1},x_{n+1})^2 & \le d(y_{n+k},x_{n+1})^2 + d(y_{n},x_{n+k+1})^2\\
& \phantom{a\,} - d(y_{n+k},x_{n+k+1})^2 - d(y_{n},x_{n+1})^2.\end{align*}
These inequalities together with \eqref{def-CAT0} yield
\begin{align*} d(y_{n+k+1},y_{n+1})^2 + d(x_{n+k+1},x_{n+1})^2 & \le d(x_{n+1},x_{n+k+1})d(y_{n+1},y_{n+k})\\ & \phantom{a\,}+d(x_{n+1},x_{n+k+1})d(y_n,y_{n+k+1}).
\end{align*}
Because $$d(y_{n+k+1},y_{n+1})^2 + d(x_{n+k+1},x_{n+1})^2 \ge 2d(y_{n+k+1},y_{n+1})d(x_{n+k+1},x_{n+1})$$ we obtain that
\begin{align*} 2d(y_{n+k+1},y_{n+1})d(x_{n+k+1},x_{n+1}) &\le d(x_{n+1},x_{n+k+1})d(y_{n+1},y_{n+k})\\
& \phantom{a\,} +d(x_{n+1},x_{n+k+1})d(y_n,y_{n+k+1}).\end{align*}
Hence,
\[2d(y_{n+k+1},y_{n+1}) - d(y_{n+1},y_{n+k}) \le d(y_n,y_{n+k+1})\]
(note that the above relation is also true when $d(x_{n+k+1},x_{n+1}) = 0$). Since
\[d(y_{n+1},y_{n+k}) \le d(y_n,y_{n+k-1}) \le d(y_n,y_{n+1}) + d(y_{n+1},y_{n+k-1}),\]
it follows that $d(y_{n+1},y_{n+k}) \le (k-1) d(y_n,y_{n+1})$. Denote $$r_{n,k} := d(y_n,y_{n+k}) \le d(x_n, x_{n+k}) \le 2 d(x_0,u) \le b.$$ Then
\[2r_{n+1,k} - (k-1)r_{n,1} \le r_{n,k+1}.\]
We show next that for any $n \in \mathbb{N}$ and $k \in \mathbb{N}^*$, 
\begin{equation}\label{eq1}
r_{n,k} \ge k r_{n+k,1} - k 2^k (r_{n,1} - r_{n+k,1}).
\end{equation} 
For $k=1$ this relation obviously holds for every $n \in \mathbb{N}$. Suppose that \eqref{eq1} holds for each $n \in \mathbb{N}$. Then for each $n \in \mathbb{N}$,
\begin{align*}
r_{n,k+1}  & \ge 2r_{n+1,k} - (k-1)r_{n,1} \\ &\ge 2\left(k r_{n+1+k,1} - k 2^k (r_{n+1,1} - r_{n+1+k,1})\right) - (k-1)r_{n,1}\\
& \ge  2\left(k r_{n+1+k,1} - k 2^k (r_{n,1} - r_{n+1+k,1})\right) - (k-1)r_{n,1}\\
& = (k+1) r_{n+1+k,1} - k 2^{k+1} (r_{n,1} - r_{n+1+k,1}) - (k-1)r_{n,1} + (k-1) r_{n+1+k,1}\\
& = (k+1) r_{n+1+k,1} - (k 2^{k+1} + k - 1)(r_{n,1} - r_{n+1+k,1})\\
& \ge (k+1) r_{n+1+k,1} - (k+1)2^{k+1}(r_{n,1} - r_{n+1+k,1}).
\end{align*}
Thus \eqref{eq1} holds for every $n \in \N$ and $k \in \N^*$, from where
\[r_{n,1} - (1+2^k)(r_{n,1}-r_{n+k,1}) \le \frac{r_{n,k}}{k} \le \frac{b}{k}.\]
Let $\varepsilon > 0$ and $\displaystyle k:=\left\lceil\frac{2b}{\varepsilon}\right\rceil$. Then for all $n \in \mathbb{N}$,
\[r_{n,1} - (1+2^k)(r_{n,1}-r_{n+k,1}) \le \frac{\varepsilon}{2}.\]
Note that $(r_{n,1})$ is a nonincreasing sequence bounded by $b$. Using \cite[Proposition 2.27]{Koh08} for $\displaystyle\varepsilon' := \frac{\varepsilon}{2(1+2^k)}$ and $g\equiv k$, there exists $\displaystyle N \le k\left\lceil \frac{2b(1+2^k)}{\varepsilon} \right\rceil$ such that $\displaystyle r_{N,1}-r_{N+k,1}  \le \frac{\varepsilon}{2(1+2^k)}$ and so $r_{N,1} \le \varepsilon$. Thus, for $n \ge \Phi(\varepsilon,b)$,
\[d(y_n,y_{n+1}) \le d(x_n,x_{n+1}) \le d(y_{n-1},y_{n}) \le r_{N,1} \le \varepsilon.\] 
\end{proof}

Suppose now that $(x_n)$ and $(y_n)$ are defined by
\begin{equation}\label{it:two}
d(y_n,T_1 x_n) \le \varepsilon_n \quad \text{and} \quad d(x_{n+1},T_2 y_n) \le \delta_n,\quad\text{for each }n\in \mathbb{N},
\end{equation}
where $\displaystyle \sum_{n = 0}^\infty \varepsilon_n< \infty$ and $\displaystyle \sum_{n = 0}^\infty \delta_n< \infty$. For $n \in \mathbb{N}$, denote $\gamma_n := \varepsilon_n + \delta_n$. 

\begin{lemma} \label{lemma1}
Let $(X,d)$ be a CAT$(0)$ space and let $T_1,T_2:X\to X$ satisfy property $(P_2)$. Denote $S:=T_2 \circ T_1$. Then for every $n \in \mathbb{N},$ 
\begin{itemize}
\item[(i)] $d(x_{n+1},S x_{n}) \le \gamma_{n}.$
\item[(ii)] If $u \in \emph{Fix}(S)$, $d(x_{n+1},u) \le \gamma_n + d(x_n,u)$.
\end{itemize}
\end{lemma}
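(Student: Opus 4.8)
The plan is to use the fact recalled in Section~2.2 that on a CAT$(0)$ space every mapping with property $(P_2)$ is nonexpansive; consequently $T_1$, $T_2$ and the composition $S = T_2\circ T_1$ are all nonexpansive. With this in hand both items become short triangle-inequality computations, and no monotone-convergence type argument is needed.

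For (i) I would interpose the ``exact'' point $T_2 y_n$ and then $S x_n = T_2(T_1 x_n)$, obtaining
$$d(x_{n+1},Sx_n)\le d(x_{n+1},T_2 y_n)+d(T_2 y_n,T_2(T_1 x_n))\le \delta_n + d(y_n,T_1 x_n)\le \delta_n+\varepsilon_n=\gamma_n,$$
where the first and last inequalities are the defining relations \eqref{it:two} and the middle one is the nonexpansivity of $T_2$.

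For (ii), since $u\in\text{Fix}(S)$ we have $Su=u$, so one further application of the triangle inequality, combined with (i) and the nonexpansivity of $S$, yields
$$d(x_{n+1},u)\le d(x_{n+1},Sx_n)+d(Sx_n,Su)\le \gamma_n+d(x_n,u).$$
I do not expect any genuine obstacle here; the only points requiring care are to invoke exactly the nonexpansivity of $T_2$ (resp.\ $S$) rather than some stronger consequence of $(P_2)$, and to match each error term $\varepsilon_n$, $\delta_n$ with the correct inequality in \eqref{it:two}.
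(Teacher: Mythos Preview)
Your proof is correct and essentially identical to the paper's: part (i) matches line for line, and for (ii) the paper merely unwinds your use of the nonexpansivity of $S$ into two separate steps through $T_2$ and $T_1$ (writing $u=T_2(T_1u)$ and applying nonexpansivity of each factor), whereas you more economically reuse (i) together with $d(Sx_n,Su)\le d(x_n,u)$.
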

\begin{proof}
\hskip 18cm

(i) $d(x_{n+1},S x_{n}) \le d(x_{n+1}, T_2 y_{n}) + d(T_2 y_{n}, T_2(T_1 x_{n})) \le \delta_{n} + d(y_{n},T_1 x_{n}) \le \gamma_{n}.$\\

(ii) Let $u \in \text{Fix}(S)$. Then
\begin{align*}
d(x_{n+1},u) &\le d(x_{n+1},T_2 y_n) + d(T_2 y_n, u) \le \delta_n + d(y_n, T_1 u)\\
& \le \delta_n + d(y_n, T_1 x_n) + d(T_1 x_n, T_1 u) \le \gamma_n + d(x_n,u).
\end{align*}
\end{proof}

\begin{theorem} \label{thm-as-reg-P2-errors-simple}
Let $(X,d)$ be a CAT$(0)$ space and let $T_1,T_2:X\to X$ satisfy property $(P_2)$ with $\emph{Fix}(S)\neq\emptyset,$ where $S:=T_2\circ T_1.$ Let $x_0 \in X$. Define the sequences $(x_n)$ and $(y_n)$ by~\eqref{it:two}. Suppose that $\displaystyle\sum_{n=0}^\infty \gamma_n$ converges with Cauchy modulus $\alpha$, i.e., $\alpha : (0,\infty) \to \mathbb{N}$,
\[\forall \varepsilon > 0 \, \forall k \in \mathbb{N}\,\left(\sum_{i=\alpha(\varepsilon)}^{\alpha(\varepsilon) + k} \gamma_i \le \varepsilon\right).\]
Let $B \ge 0$ such that $\displaystyle\sum_{n\ge 0}\gamma_n \le B $ and $b > 0$ such that there exists $u \in \emph{Fix}(S)$ with $d(x_0,u) \le b.$ Then 
\[\forall \varepsilon > 0 \, \forall n \ge \Phi'(\varepsilon,b, B, \alpha) \, (d(x_n,x_{n+1}) \le \varepsilon),\]
where
\[
\Phi'(\varepsilon,b,B,\alpha) := \alpha(\varepsilon/3) + \Phi(\varepsilon/3,2(b+B)),
\] 
and $\Phi$ is the rate of asymptotic regularity from Theorem 
\ref{thm-as-reg-P2}. 
For $d(y_n,y_{n+1})$ the same result holds with rate 
$\Phi''(\varepsilon,b,B,\alpha):= \Phi'(\varepsilon/2,b,B,\alpha).$
\end{theorem}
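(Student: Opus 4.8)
The plan is to reduce the statement to the error-free Theorem \ref{thm-as-reg-P2} by discarding the errors from the index on where their tail is negligible. First I would record an a priori bound on the orbit: iterating Lemma \ref{lemma1}(ii) from $x_0$ gives, for every $n\in\mathbb{N}$,
\[ d(x_n,u)\le d(x_0,u)+\sum_{i=0}^{n-1}\gamma_i\le b+B . \]
I also recall that, since mappings satisfying property $(P_2)$ are nonexpansive, $S=T_2\circ T_1$ is nonexpansive, and that Lemma \ref{lemma1}(i) gives $d(x_{n+1},Sx_n)\le\gamma_n$; these two facts control how far the perturbed iteration drifts away from an exact orbit of $S$.

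Fix $\varepsilon>0$ and set $N_0:=\alpha(\varepsilon/3)$. Letting the upper summation index tend to infinity in the Cauchy modulus condition yields $\sum_{i\ge N_0}\gamma_i\le\varepsilon/3$. Define the exact orbit $(\hat x_m)_{m\ge N_0}$ by $\hat x_{N_0}:=x_{N_0}$ and $\hat x_{m+1}:=S\hat x_m$. On the one hand, combining nonexpansivity of $S$ with $d(x_{m+1},Sx_m)\le\gamma_m$ gives $d(x_{m+1},\hat x_{m+1})\le\gamma_m+d(x_m,\hat x_m)$, so by induction
\[ d(x_m,\hat x_m)\le\sum_{i=N_0}^{m-1}\gamma_i\le\frac{\varepsilon}{3}\qquad\text{for all }m\ge N_0 . \]
On the other hand, the a priori bound gives $2d(x_{N_0},u)\le 2(b+B)$, so Theorem \ref{thm-as-reg-P2}, applied to the exact sequence $(\hat x_m)$ reindexed to start at $0$ and with parameter $2(b+B)$, yields $d(\hat x_m,\hat x_{m+1})\le\varepsilon/3$ whenever $m-N_0\ge\Phi(\varepsilon/3,2(b+B))$.

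Putting the pieces together through the triangle inequality, for every $m\ge N_0+\Phi(\varepsilon/3,2(b+B))=\Phi'(\varepsilon,b,B,\alpha)$ one obtains
\[ d(x_m,x_{m+1})\le d(x_m,\hat x_m)+d(\hat x_m,\hat x_{m+1})+d(\hat x_{m+1},x_{m+1})\le\frac{\varepsilon}{3}+\frac{\varepsilon}{3}+\frac{\varepsilon}{3}=\varepsilon, \]
which is the claim for $(x_n)$. For $(y_n)$, nonexpansivity of $T_1$ together with $d(y_m,T_1x_m)\le\varepsilon_m$ gives $d(y_m,y_{m+1})\le\varepsilon_m+d(x_m,x_{m+1})+\varepsilon_{m+1}\le d(x_m,x_{m+1})+\gamma_m+\gamma_{m+1}$; for $m\ge\Phi''(\varepsilon,b,B,\alpha)=\Phi'(\varepsilon/2,b,B,\alpha)$ one has $d(x_m,x_{m+1})\le\varepsilon/2$ by the first part, and since $m\ge\alpha(\varepsilon/6)$ also $\gamma_m+\gamma_{m+1}\le\varepsilon/6$, so $d(y_m,y_{m+1})\le\varepsilon$.

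The argument is essentially routine once the reduction is in place; the only points that need a little attention are the bookkeeping of the three $\varepsilon/3$ contributions and the verification that the a priori estimate $d(x_n,u)\le b+B$ is exactly what feeds the parameter $2(b+B)$ into Theorem \ref{thm-as-reg-P2}. I do not expect a genuine obstacle beyond this.
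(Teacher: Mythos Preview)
Your proof is correct and is essentially the same as the paper's: the paper likewise starts the exact $S$-orbit (denoted $(z_n)$ there) at $x_{\alpha(\varepsilon/3)}$, uses Lemma~\ref{lemma1}(ii) to feed the bound $2(b+B)$ into Theorem~\ref{thm-as-reg-P2}, bounds $d(z_n,x_{\alpha(\varepsilon/3)+n})$ inductively via Lemma~\ref{lemma1}(i) and nonexpansivity of $S$, and finishes with the same triangle-inequality split into three $\varepsilon/3$ pieces. The argument for $(y_n)$ is also the same up to the harmless replacement of $\varepsilon_m+\varepsilon_{m+1}$ by the larger quantity $\gamma_m+\gamma_{m+1}$.
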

\begin{proof} Let $(x_n),(y_n)$ be defined as in \eqref{it:two} and consider 
for $S:=T_2\circ T_1$, 
\[ z_n:= S^n(x_{\alpha(\varepsilon/3)}). \] 
Note that $(z_n)$ is the sequence 
$(x_n)$ defined by \eqref{it:one} with the starting point 
$$z_0=x_{\alpha(\varepsilon/3)}.$$ \\
By induction, Lemma \ref{lemma1}.(ii) gives 
\[ d(x_n,u)\le d(x_0,u)+\sum^{n-1}_{i=0} \gamma_i.\]
Thus,
\[d(z_0,u)=d(x_{\alpha(\varepsilon/3)},u)\le d(x_0,u)+
\sum^{\alpha(\varepsilon/3)-1}_{i=0} \gamma_i\le b+B\]
and so one can apply Theorem \ref{thm-as-reg-P2} to the sequence $(z_n)$ to get that for every $n\ge \Phi(\varepsilon/3,2(b+B))$, \[d(z_n,z_{n+1})\le \frac{\varepsilon}{3}.\] 
One easily shows by induction on $n$ that for all $n\in\mathbb{N}$,
\[ d(z_n,x_{\alpha(\varepsilon/3)+n})\le 
\sum^{\alpha(\varepsilon/3)+n-1}_{i=\alpha(\varepsilon/3)} \gamma_i \le \frac{\varepsilon}{3}.\]
For $n=0$ this is obvious and for the induction step we argue 
(using Lemma \ref{lemma1}.(i))
\begin{align*}
d(z_{n+1},x_{\alpha(\varepsilon/3)+n+1}) & \le
d(Sz_n,Sx_{\alpha(\varepsilon/3)+n})+d(Sx_{\alpha(\varepsilon/3)+n},
x_{\alpha(\varepsilon/3)+n+1})\\ 
& \le d(z_n,x_{\alpha(\varepsilon/3)+n})
+\gamma_{\alpha(\varepsilon/3)+n}.
\end{align*}
Hence for all $n\ge\alpha(\varepsilon/3)+\Phi(\varepsilon/3,2(b+B))$
\[ d(x_n,x_{n+1})\le d(x_n,z_{n-\alpha(\varepsilon/3)})+
d(z_{n-\alpha(\varepsilon/3)},z_{n-\alpha(\varepsilon/3)+1})+
d(z_{n-\alpha(\varepsilon/3)+1},x_{n+1})\le \varepsilon. \]
The claim for $(y_n)$ follows from 
\[ d(y_n,y_{n+1})\le d(T_1x_n,T_1x_{n+1})+\varepsilon_n+\varepsilon_{n+1}
\le d(x_n,x_{n+1})+\varepsilon_n+\varepsilon_{n+1} \] 
and the fact that for $n\ge \Phi'(\frac{\varepsilon}{2},b,B,\alpha)\ge 
\alpha(\varepsilon/6)$ one has $\varepsilon_n+\varepsilon_{n+1}\le 
\frac{\varepsilon}{6}$ and so 
\[ d(y_n,y_{n+1})\le d(x_n,x_{n+1})+\frac{\varepsilon}{6}\le \frac{\varepsilon}{2}+\frac{\varepsilon}{6}<\varepsilon. \] 
\end{proof}
\begin{remark} In the situation of Corollary 3.3 from {\rm \cite{AriLopNic15}} 
we can take the quadratic rate $\theta$ from that corollary instead of 
the exponential rate $\Phi$ in Theorem \ref{thm-as-reg-P2-errors-simple} 
above.
\end{remark}

\section{Rate of metastability}
Consider the sequences $(x_n),(y_n)$ from (\ref{it:two}).

\begin{lemma} \label{lemma-Fejer}
$\chi(n,m,r):=m(r+1)$ is a modulus of uniform quasi-Fej\'er monotonicity 
(in the sense of {\rm \cite{KohLeuNic15}}) 
of $(x_n)$ w.r.t. $F:=\emph{Fix}(S),$ where $S:=T_2\circ T_1,$ and the error 
terms $\gamma_n:=\delta_n+\varepsilon_n,$ i.e. 
\[ \begin{array}{l} \forall r,n,m\in\mathbb{N} \,\forall p\in X \ \Big( d(p,Sp)\le \frac{1}{\chi(n,m,r)+1} \rightarrow \\ \hspace*{1cm} 
\forall l\le m \left( d(x_{n+l},p)<d(x_n,p)+\sum^{n+l-1}_{i=n} 
\gamma_i +\frac{1}{r+1}\right)\Big). \end{array}\] 
Analogously for $(y_n)$ with $\gamma'_n:=\varepsilon_{n+1}+\delta_n$ and 
$S':=T_1\circ T_2.$
\end{lemma}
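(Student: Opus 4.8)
The plan is to isolate a one-step estimate valid for an \emph{arbitrary} point $p\in X$ and then iterate it, carefully bookkeeping the accumulated $d(p,Sp)$-error. First I would record that, since in a CAT$(0)$ space every mapping with property $(P_2)$ is nonexpansive, both $T_1,T_2$ and hence the composition $S=T_2\circ T_1$ are nonexpansive. Combining this with Lemma~\ref{lemma1}.(i) and the triangle inequality gives, for all $n\in\mathbb{N}$ and all $p\in X$,
\[ d(x_{n+1},p)\le d(x_{n+1},Sx_n)+d(Sx_n,Sp)+d(Sp,p)\le \gamma_n+d(x_n,p)+d(p,Sp). \]

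Next I would fix $r,n,m\in\mathbb{N}$ and $p\in X$ with $d(p,Sp)\le\frac{1}{m(r+1)+1}$, and prove by induction on $l$ that for all $l\le m$
\[ d(x_{n+l},p)\le d(x_n,p)+\sum_{i=n}^{n+l-1}\gamma_i+\frac{l}{m(r+1)+1}. \]
The case $l=0$ is trivial; the step follows by applying the one-step estimate at index $n+l$, then the induction hypothesis, and finally $d(p,Sp)\le\frac{1}{m(r+1)+1}$. For $l\le m$ one has $\frac{l}{m(r+1)+1}\le\frac{m}{m(r+1)+1}<\frac{1}{r+1}$ (and the degenerate case $m=0$ forces $l=0$, where the claim is immediate), so the inequality becomes the required strict one. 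Since the bound $\chi(n,m,r)=m(r+1)$ does not depend on $n$, this is a \emph{uniform} modulus of quasi-Fej\'er monotonicity of $(x_n)$ w.r.t.\ $F=\mathrm{Fix}(S)$ and the error terms $\gamma_n$.

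For $(y_n)$ I would run the identical argument once the analogue of Lemma~\ref{lemma1}.(i) is established, namely $d(y_{n+1},S'y_n)\le\gamma'_n$ for $S'=T_1\circ T_2$ and $\gamma'_n=\varepsilon_{n+1}+\delta_n$: using~\eqref{it:two} at indices $n$ and $n+1$ together with the nonexpansivity of $T_1$,
\[ d(y_{n+1},S'y_n)\le d(y_{n+1},T_1x_{n+1})+d(T_1x_{n+1},T_1(T_2y_n))\le\varepsilon_{n+1}+d(x_{n+1},T_2y_n)\le\varepsilon_{n+1}+\delta_n. \]
As $S'$ is again nonexpansive, the one-step estimate $d(y_{n+1},p)\le d(y_n,p)+\gamma'_n+d(p,S'p)$ holds for every $p\in X$, and the same induction on $l$ yields the analogous conclusion. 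The only real point of care is the bookkeeping: one needs the total perturbation $l\cdot d(p,Sp)$, accumulated over up to $m$ steps, to stay strictly below $\frac{1}{r+1}$, which is precisely why $m$ (rather than a constant) must appear in $\chi$; and for $(y_n)$ one must use the shifted error term $\gamma'_n$ dictated by the order in which the two approximate resolvents are applied in~\eqref{it:two}.
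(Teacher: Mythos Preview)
Your proposal is correct and follows essentially the same approach as the paper: derive the one-step inequality $d(x_{n+1},p)\le d(x_n,p)+\gamma_n+d(p,Sp)$, iterate it to obtain $d(x_{n+l},p)\le d(x_n,p)+\sum_{i=n}^{n+l-1}\gamma_i+l\cdot d(p,Sp)$, and then bound $l\cdot d(p,Sp)<\frac{1}{r+1}$ via the hypothesis. The only cosmetic difference is that the paper obtains the one-step estimate via $d(x_{n+1},Sp)\le d(x_n,p)+\gamma_n$ (and $d(y_{n+1},S'p)\le d(y_n,p)+\gamma'_n$) directly, whereas you route it through $Sx_n$ (resp.\ $S'y_n$) using Lemma~\ref{lemma1}.(i) and nonexpansivity of $S$; both yield the same inequality.
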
 
\begin{proof} An easy calculation (see $(14)$ in \cite{AriLopNic15}) gives
that for all $p\in X$ and $n\in\mathbb{N}$ 
\[ d(x_{n+1},Sp)\le d(x_{n},p)+\gamma_n \] 
and so 
\[ d(x_{n+1},p)\le d(x_n,p)+\gamma_n +d(p,Sp). \] 
Hence, 
\[ d(x_{n+l},p)\le d(x_n,p)+\sum^{n+l-1}_{i=n}\gamma_i+ l\cdot d(p,Sp). \]
The claim is now immediate. \\ 
The second claim for $(y_n)$ is proved analogously using 
\[ d(y_{n+1},S'p)\le d(y_n,p)+\gamma'_n. \]
\end{proof}  

\begin{lemma} 
Under the assumptions of 
Theorem \ref{thm-as-reg-P2-errors-simple}, 
let $\beta$ be a rate of convergence for $\gamma_n\to 0$ and 
$\Phi'(\varepsilon):=\Phi'(\varepsilon,b,B,\alpha)$ be the rate of 
convergence for $d(x_n,x_{n+1})\to 0$ from Theorem \ref{thm-as-reg-P2-errors-simple}.
Then $\Phi_{\beta}(\varepsilon):=\max\{ \beta(\varepsilon/2),\Phi'(\varepsilon/2)\}$ 
is a rate of asymptotic regularity for $d(x_n,Sx_n)\to 0,$ i.e. 
\[ \forall \varepsilon>0\,\forall n\ge \Phi_{\beta}(\varepsilon)\, (d(x_n,Sx_n)
\le \varepsilon).\] 
Analogously for $(y_n)$ and $S':=T_1\circ T_2$ with $\beta$ being replaced by a rate of convergence 
$\beta'$ for $\gamma'_n\to 0$ and $\Phi'_{\beta'}(\varepsilon):=
\max\{ \beta'(\varepsilon/2),\Phi''(\varepsilon/2)\}.$ 
\end{lemma}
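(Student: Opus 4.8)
The plan is to reduce the asymptotic regularity of $d(x_n,Sx_n)$ to two facts that are already available: the rate $\Phi'$ for $d(x_n,x_{n+1})\to 0$ from Theorem \ref{thm-as-reg-P2-errors-simple}, and the rate $\beta$ for the decay of the error terms $\gamma_n$. The whole argument is one triangle inequality. First I would write
\[ d(x_n,Sx_n)\le d(x_n,x_{n+1})+d(x_{n+1},Sx_n), \]
and estimate the second summand by Lemma \ref{lemma1}.(i), which gives $d(x_{n+1},Sx_n)\le\gamma_n$. Hence $d(x_n,Sx_n)\le d(x_n,x_{n+1})+\gamma_n$ for all $n\in\mathbb{N}$.

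Next I would split $\varepsilon$ into two halves, which is precisely what the $\max$ in the definition of $\Phi_\beta$ is designed to absorb. For $n\ge\Phi'(\varepsilon/2)$ Theorem \ref{thm-as-reg-P2-errors-simple} yields $d(x_n,x_{n+1})\le\varepsilon/2$, and for $n\ge\beta(\varepsilon/2)$ the defining property of a rate of convergence for $\gamma_n\to 0$ gives $\gamma_n\le\varepsilon/2$. Thus for $n\ge\Phi_\beta(\varepsilon)=\max\{\beta(\varepsilon/2),\Phi'(\varepsilon/2)\}$ both bounds hold simultaneously and $d(x_n,Sx_n)\le\varepsilon$, which is the assertion.

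For the statement about $(y_n)$ I would argue identically, with $S$ replaced by $S'=T_1\circ T_2$ and $\gamma_n$ by $\gamma'_n=\varepsilon_{n+1}+\delta_n$. The only additional point to verify is the analogue of Lemma \ref{lemma1}.(i), namely $d(y_{n+1},S'y_n)\le\gamma'_n$: indeed
\[ d(y_{n+1},T_1(T_2 y_n))\le d(y_{n+1},T_1 x_{n+1})+d(T_1 x_{n+1},T_1(T_2 y_n))\le\varepsilon_{n+1}+d(x_{n+1},T_2 y_n)\le\varepsilon_{n+1}+\delta_n, \]
using that property $(P_2)$ implies nonexpansivity of $T_1$ in a CAT$(0)$ space. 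Then $d(y_n,S'y_n)\le d(y_n,y_{n+1})+\gamma'_n$, and one applies the rate $\Phi''$ for $d(y_n,y_{n+1})\to 0$ from Theorem \ref{thm-as-reg-P2-errors-simple} together with a rate $\beta'$ for $\gamma'_n\to 0$, again splitting $\varepsilon$ in half, to obtain $d(y_n,S'y_n)\le\varepsilon$ for $n\ge\Phi'_{\beta'}(\varepsilon)=\max\{\beta'(\varepsilon/2),\Phi''(\varepsilon/2)\}$.

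There is no genuine obstacle here: the proof is pure bookkeeping with the triangle inequality and two rates already in hand. The only things to be careful about are to cite Lemma \ref{lemma1}.(i) rather than re-deriving it, and to use the shifted error sequence $\gamma'_n=\varepsilon_{n+1}+\delta_n$ (not $\gamma_n$) in the $(y_n)$-case, since the index shift on $\varepsilon$ is what makes the estimate $d(y_{n+1},S'y_n)\le\gamma'_n$ come out exactly.
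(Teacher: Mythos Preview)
Your proof is correct and follows exactly the paper's approach: a single triangle inequality $d(x_n,Sx_n)\le d(x_n,x_{n+1})+d(x_{n+1},Sx_n)$ combined with Lemma~\ref{lemma1}(i) and the two rates $\Phi',\beta$ applied to $\varepsilon/2$, and analogously for $(y_n)$ after establishing $d(y_{n+1},S'y_n)\le\gamma'_n$. The paper's version is terser (it just writes the chain of inequalities and says ``one reasons analogously'' for $(y_n)$), but the content is identical.
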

\begin{proof} Let $n\ge \Phi_{\beta}(\varepsilon).$ Then (using Lemma \ref{lemma1}.(i))
\[ d(x_n,Sx_n) \le d(x_n,x_{n+1})+d(x_{n+1},Sx_n)\le 
\frac{\varepsilon}{2} +\gamma_n \le \frac{\varepsilon}{2}+
\frac{\varepsilon}{2}=\varepsilon. \]
For $(y_n)$ one reasons analogously. Thus, for all $n\ge \Phi'_{\beta'}(\varepsilon) = \max\{ \beta'(\varepsilon/2),\Phi'(\varepsilon/4)\}$,
\[ d(y_n,S'y_n)\le d(y_n,y_{n+1})
+d(y_{n+1},S'y_n)\le \frac{\varepsilon}{2}+\gamma'_n\le \varepsilon.\] 
\end{proof}

\begin{corollary} \label{cor-liminf} 
$\widehat{\Phi}_{\beta}(k,N):=\max\{ N,\max\{ \Phi_{\beta}(1/(i+1)):i\le k\} \}$ 
is a monotone $\liminf$-bound
for $(x_n)$ w.r.t. $\emph{Fix}(S)$ 
in the sense of {\rm \cite{KohLeuNic15}}.  
Analogously for $(y_n)$ with $\Phi'_{\beta'}$ and $\emph{Fix}(S')$. 
\end{corollary}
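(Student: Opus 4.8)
The plan is to verify that $\widehat{\Phi}_{\beta}$ satisfies the definition of a monotone $\liminf$-bound from \cite{KohLeuNic15}: namely that for every $k,N\in\mathbb{N}$ there exists $n$ with $N\le n\le \widehat{\Phi}_{\beta}(k,N)$ and $d(x_n,Sx_n)\le \frac{1}{k+1}$, together with the monotonicity of $\widehat{\Phi}_{\beta}$ in both arguments. First I would recall from the previous lemma that $\Phi_{\beta}$ is a rate of asymptotic regularity for $d(x_n,Sx_n)\to 0$, so that $d(x_n,Sx_n)\le\frac{1}{k+1}$ holds for \emph{all} $n\ge \Phi_{\beta}(1/(k+1))$. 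Since $\widehat{\Phi}_{\beta}(k,N)\ge \Phi_{\beta}(1/(k+1))$ by construction (the inner $\max$ over $i\le k$ includes the term $i=k$) and also $\widehat{\Phi}_{\beta}(k,N)\ge N$, the index $n:=\widehat{\Phi}_{\beta}(k,N)$ itself lies in $[N,\widehat{\Phi}_{\beta}(k,N)]$ and satisfies $d(x_n,Sx_n)\le\frac{1}{k+1}$; this is the existence part.

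Next I would check monotonicity: $\widehat{\Phi}_{\beta}(k,N)$ is nondecreasing in $N$ trivially, and nondecreasing in $k$ because enlarging $k$ only enlarges the finite set $\{1/(i+1):i\le k\}$ over which the maximum of $\Phi_{\beta}$ is taken (no monotonicity assumption on $\Phi_{\beta}$ itself is needed, since we take the max over an initial segment). I would note that this is exactly why the definition wraps $\Phi_{\beta}$ inside a running maximum rather than invoking $\Phi_{\beta}$ directly. The argument for $(y_n)$ with respect to $\mathrm{Fix}(S')$ is verbatim the same, replacing $\Phi_{\beta}$ by $\Phi'_{\beta'}$, $S$ by $S'$, and $d(x_n,Sx_n)$ by $d(y_n,S'y_n)$, using the corresponding statement of the previous lemma.

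There is no real obstacle here: the corollary is a routine packaging step whose only content is (a) reading off the correct quantifier structure of the $\liminf$-bound notion from \cite{KohLeuNic15}, and (b) observing that a rate of convergence trivially yields a monotone $\liminf$-bound once one inserts the $\max$ with $N$ (to meet the lower bound $n\ge N$) and the running $\max$ over $i\le k$ (to secure monotonicity in $k$). The mild care needed is purely bookkeeping: making sure the $\max$ over $i\le k$ is taken of $\Phi_{\beta}$ evaluated at $\frac{1}{i+1}$ and not of something shifted, so that the witness index indeed controls $d(x_n,Sx_n)$ at precision $\frac{1}{k+1}$.
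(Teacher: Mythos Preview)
Your proposal is correct and matches the paper's (implicit) approach: the paper states the corollary without proof, treating it as an immediate consequence of the preceding lemma together with the definition of a monotone $\liminf$-bound from \cite{KohLeuNic15}, and your argument simply spells out these details.
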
 
\begin{definition}[see \cite{KohLeuNic15}] 
Let $(X,d)$ be a totally bounded metric space. We call a function 
$\gamma:\mathbb{N}\to\mathbb{N}$ a modulus of total boundedness for 
$X$ if for any sequence $(a_n)$ in $X$ 
\[ \exists 0\le i<j\le \gamma(k) \ \left( d(a_i,a_j)\le\frac{1}{k+1}\right). \]
\end{definition}
  
\begin{theorem} 
Under the assumptions of 
Theorem \ref{thm-as-reg-P2-errors-simple}, let $X$ additionally 
be totally bounded with a modulus of total 
boundedness $\gamma.$
Then $(x_n)$ is Cauchy with rate of metastability 
$\widehat{\Psi}(k,g,\gamma,\alpha,\beta,b,B),$ i.e.  for all $ k\in\N$  and for all  $g:\N\to\N $ we have 
\[ \exists n\le 
\widehat{\Psi}(k,g,\gamma,\alpha,\beta,b,B) \,\forall i,j\in 
[n,n+g(n)] \ \left( d(x_i,x_j)\le \frac{1}{k+1}\right), \]
where 
\[ \begin{array}{l} \widehat{\Psi}(k,g,\gamma,\alpha,\beta,b,B):=
\widehat{\Psi}_0(P), \ P:=\gamma (8k+7)+1, \ \xi(k):=\alpha(1/(k+1)), \\[1mm] 
 \chi^M_g(n,k):=(\max\limits_{i\le n} g(i))\cdot (k+1),\end{array}\]
and $\widehat{\Psi}_0$ is defined recursively
\[  \widehat{\Psi}_0(0):=0, \
\widehat{\Psi}_0(n+1):= \widehat{\Phi}_{\beta}\left(\chi^M_g\left(\widehat{\Psi}_0(n),
8k+7\right),\xi(8k+7)\right).  \]  
Analogously for $(y_n)$ with $\Phi'_{\beta'}$ and $\xi(k):=\alpha'(1/(k+1)),$ 
where $\alpha'$ is a Cauchy modulus for $\sum\limits^{\infty}_{n=0} \gamma'_n.$
\end{theorem}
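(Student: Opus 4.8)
The plan is to derive the rate of metastability for $(x_n)$ by combining the three ingredients already assembled in this section within the general proof-mining framework of \cite{KohLeuNic15}. Specifically, Lemma \ref{lemma-Fejer} provides a modulus of uniform quasi-Fej\'er monotonicity $\chi(n,m,r)=m(r+1)$ for $(x_n)$ with respect to $F:=\mathrm{Fix}(S)$ and error terms $\gamma_n$; Corollary \ref{cor-liminf} provides a monotone $\liminf$-bound $\widehat{\Phi}_{\beta}$ for $(x_n)$ with respect to $\mathrm{Fix}(S)$; and the additional hypothesis supplies a modulus of total boundedness $\gamma$ for $X$. These are exactly the three inputs required by the metastability theorem of \cite{KohLeuNic15}, so the proof should be an instantiation of that theorem with the bookkeeping needed to track the Cauchy modulus $\alpha$ of $\sum\gamma_n$ (which controls the error contributions) and the parameters $b,B$.

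First I would recall the abstract metastability result from \cite{KohLeuNic15}: if a sequence $(a_n)$ in a totally bounded space is uniformly quasi-Fej\'er monotone (with modulus $\chi$ and summable errors with Cauchy modulus $\alpha$) with respect to a set $F$, and admits a monotone $\liminf$-bound with respect to $F$, then $(a_n)$ is Cauchy with an explicitly computable rate of metastability built by iterating the $\liminf$-bound. I would then check that our situation fits: the errors $\gamma_n$ are handled by choosing the tolerance scale so that the tail $\sum_{i\ge \alpha(1/(k+1))}\gamma_i$ is absorbed into the target $1/(k+1)$, which is why $\xi(k):=\alpha(1/(k+1))$ appears as a shift in the recursion. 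The factor $8k+7$ in place of $k$ is the standard loss incurred when converting between "metastable Cauchy with tolerance $1/(k+1)$" and the various intermediate tolerances needed (splitting $d(x_i,x_j)$ through a common approximate fixed point, accounting for quasi-Fej\'er slack and for error tails on both sides); I would verify that a constant-times-$k$ bookkeeping suffices, matching the choice $P:=\gamma(8k+7)+1$ which bounds how many "essentially distinct" approximate-fixed-point regions the total boundedness modulus allows.

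The core of the argument is the recursion defining $\widehat{\Psi}_0$. The idea is the usual pigeonhole-on-a-totally-bounded-space scheme: one produces a finite increasing list of candidate indices $n_0=0 < n_1 < \cdots$ where $n_{j+1}=\widehat{\Phi}_{\beta}(\chi^M_g(n_j,8k+7),\xi(8k+7))$, i.e. each new candidate is obtained by feeding into the $\liminf$-bound the "look-ahead" quantity $\chi^M_g(n_j,8k+7)=(\max_{i\le n_j}g(i))(8k+8)$, which is exactly what the quasi-Fej\'er modulus $\chi$ demands to guarantee monotone behaviour over the block $[n,n+g(n)]$; the shift by $\xi(8k+7)$ ensures we start past the point where the error tail is negligible. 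Running this for $P$ steps, the modulus of total boundedness $\gamma$ forces two of the associated approximate fixed points to be within $1/(8k+8)$ of each other, and then the quasi-Fej\'er inequality propagates this to give $d(x_i,x_j)\le 1/(k+1)$ for $i,j$ in the corresponding block; the witness $n$ is one of the $n_j$ with $j\le P$, hence bounded by $\widehat{\Psi}_0(P)$.

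The main obstacle I expect is purely the constant-chasing: verifying that $8k+7$ is indeed a large enough blow-up so that every $\varepsilon$-splitting in the \cite{KohLeuNic15} argument (through the approximate fixed point, through two quasi-Fej\'er slacks, through two error tails controlled by $\alpha$) sums to at most $1/(k+1)$, and confirming that the look-ahead $\chi^M_g$ correctly feeds the first argument of the $\liminf$-bound $\widehat{\Phi}_{\beta}(\cdot,\cdot)$ whose second slot expects the "starting index past which errors are small", namely $\xi(8k+7)=\alpha(1/(8k+8))$. Once the abstract theorem is quoted correctly and these substitutions are matched term-by-term, the proof for $(x_n)$ is complete; the statement for $(y_n)$ follows verbatim by replacing $S$, $\chi$, $\alpha$, $\beta$ and $\widehat{\Phi}_{\beta}$ with their primed analogues $S'$, the quasi-Fej\'er modulus for $(y_n)$, $\alpha'$, $\beta'$ and $\Phi'_{\beta'}$, since Lemma \ref{lemma-Fejer} and Corollary \ref{cor-liminf} were already stated in both versions.
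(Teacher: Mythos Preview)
Your proposal is correct and follows essentially the same route as the paper: both derive the metastability bound by invoking the abstract quasi-Fej\'er metastability theorem of \cite{KohLeuNic15} with the three ingredients already in hand---the modulus $\chi$ from Lemma~\ref{lemma-Fejer}, the $\liminf$-bound $\widehat{\Phi}_{\beta}$ from Corollary~\ref{cor-liminf}, and the total boundedness modulus $\gamma$. The paper's proof is simply the one-line citation of that theorem (noting $G=H=\mathrm{id}$ so $\alpha_G=\beta_H=\mathrm{id}$), whereas you additionally unpack how the recursion and the constant $8k+7$ arise inside that theorem; but the approach is the same.
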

\begin{proof} The result follows from Theorem 6.4 in 
\cite{KohLeuNic15} together with Lemma \ref{lemma-Fejer} and 
Corollary \ref{cor-liminf}. Note that in our case $G=H=id$ and so we can 
take $\alpha_G:=\beta_H:=id$ as well. 
\end{proof}
\section*{Acknowledgements} U. Kohlenbach has been supported by the German Science Foundation (DFG
Project KO 1737/5-2).  G. L\'{o}pez-Acedo has been  supported by DGES (Grant MTM2012-34847-C02-01), Junta de Andaluc\'{\i}a (Grant P08-FQM-03453). A.~Nicolae has been supported by a grant of the Babe\c{s}-Bolyai University, project number GSCE-30256-2015. Part of this work was carried out while U. Kohlenbach and   A. Nicolae were visiting the University of Seville. They would like to thank the Department of Mathematical Analysis and the Institute of Mathematics of the University of Seville (IMUS) for the hospitality.


\begin{thebibliography}{99}
\bibitem{AriLopNic15} 
Ariza-Ruiz, D., L\'{o}pez-Acedo, G., Nicolae, A.: The asymptotic behavior of the composition of firmly nonexpansive mappings. J. Optim. Theory Appl.  {\bf 167}, 409--429  (2015)

\bibitem{AriLeuLop14}
Ariza, D., Leu\c stean, L., L\'{o}pez-Acedo, G.: Firmly nonexpansive mappings in classes of geodesic spaces. Trans. Amer. Math. Soc. {\bf 366}, 4299--4322 (2014)

\bibitem{Bac14}
Ba\v{c}\'{a}k, M.: Convex analysis and optimization in Hadamard spaces. De Gruyter Series in Nonlinear Analysis and Applications, vol. 22. Walter de Gruyter \& Co., Berlin (2014)

\bibitem{Ban14} 
Banert, S.: Backward-backward splitting in Hadamard spaces. J. Math. Anal. Appl. {\bf 414}, 656--665 (2014)

\bibitem{BauComRei05} 
Bauschke, H.H., Combettes, P.L., Reich, S.: The asymptotic behavior of the composition of two resolvents. Nonlinear Anal. {\bf 60}, 283--301 (2005)

\bibitem{BriHae99} 
Bridson, M., Haefliger, A.: Metric Spaces of Non-positive Curvature. Grundlehren der Mathematischen Wissenschaften, vol. 319. Springer, Berlin (1999)

\bibitem{Bro67} 
Browder, F.E.: Convergence theorems for sequences of nonlinear operators in Banach spaces. Math. Z. {\bf 100}, 201--225 (1967)

\bibitem{Bru73} 
Bruck, R.E.: Nonexpansive projections on subsets of Banach spaces. Pacific J. Math. {\bf 47}, 341--355 (1973)

\bibitem{GoebelReich} Goebel, K., Reich, S.:
Uniform convexity, hyperbolic geometry, and nonexpansive mappings. 
Monographs and Textbooks in Pure and Applied Mathematics, 83. Marcel
Dekker, Inc., New York (1984)

\bibitem{Koh08} 
Kohlenbach, U.: Applied Proof Theory: Proof Interpretations and their Use in Mathematics. Springer Monographs in Mathematics. Springer, Berlin-Heidelberg (2008)	

\bibitem{Koh15} 
Kohlenbach, U.: On the quantitative asymptotic behavior of strongly nonexpansive mappings in Banach and geodesic spaces. Israel J. Math. (in press)

\bibitem{KohLeuNic15} 
Kohlenbach, U., Leu\c{s}tean, L., Nicolae, A.: Quantitative results on Fej\'er monotone sequences. Preprint 2015. 
arXiv:1412.5563.

\bibitem{Nic13}
Nicolae, A.: Asymptotic behavior of averaged and firmly nonexpansive mapping in geodesic spaces. Nonlinear Anal. {\bf 87}, 102--115 (2013)
\bibitem{Tao07} 
Tao, T.: Soft analysis, hard analysis, and the finite convergence principle, Essay posted 
May 23, 2007, appeared in: Tao, T., Structure and Randomness: Pages from Year One of a 
Mathematical Blog.  Amer. Math. Soc., Providence, RI (2008)



\end{thebibliography}
\end{document}